\newtheorem{theorem}{Theorem}
\newtheorem{corollary}{Corollary}
\newtheorem{assumption}{Assumption}
 \pgfplotsset{compat=newest}
 \pgfplotsset{plot coordinates/math parser=false}
 \pgfplotsset{label style={font=\small}}
 \pgfplotsset{legend style={font=\small}}
 \pgfplotsset{x tick label style={font=\small}}
 \pgfplotsset{y tick label style={font=\small}}
\newlength\figureheight
    \newlength\figurewidth
\newcommand{\R}{\mathbb{R}}
\newcommand{\N}{\mathbb{N}}
\newcommand{\PositiveMeasures}{\mathcal{M}^+}
\newcommand{\ubar}{\underline{u}}
\newcommand{\xbar}{\underline{x}}
\newcommand{\diff}{\mathrm{d}}
\author{Mathieu Claeys 
\thanks{M. Claeys is with Department of Engineering, University of Cambridge, Trumpington Street, Cambridge CB2 1PX, United Kingdom,
        \texttt{mathieu.claeys@eng.cam.ac.uk}}%
}
\title{Reconstructing trajectories from the moments of occupation measures}
\begin{document}
\maketitle

\begin{abstract}
Moment optimization techniques have been recently proposed to solve globally various classes of optimal control problems. As those methods return truncated moment sequences of occupation measures, this paper explores a numeric method for reconstructing optimal trajectories and controls from this data. In fact, by approximating occupation measures by atomic measures on a given grid, the problem reduces to a finite-dimensional linear program. In contrast with earlier numerical methods, this linear program is guaranteed to be feasible, no tolerance needs to be specified, and its size can be properly controlled. When combined with local optimal control solvers, this yields a powerful and flexible numerical approach for tackling difficult control problems, as demonstrated by examples.\\
\end{abstract}

Moments optimization techniques have emerged recently as a versatile tool for the global resolution of many non linear optimization problems, see for instance \cite{Lasserre2009Moments} and references therein for applications to polynomial optimization, optimal control, stochastic processes, and more. The general procedure is illustrated in Fig. \ref{fig:momentApproach}. First of all, the problem of interest is lifted or relaxed as a Linear Program (LP) on measures. As such, the problem becomes convex, albeit on a vector space which is hardly tractable in the general case, besides brute-force discretization. However, when problem data is polynomial, measures can be manipulated by their moments, which leads to a well studied hierarchy of moment relaxations, whose cost converges asymptotically to that of the measure LP.

\begin{figure}[b]
\centering

\begin{tikzpicture} [->,>=stealth', shorten >=1pt, node distance = 1.7cm, auto,
                     block/.style={rectangle, draw, thick, text width=7em, text centered, rounded corners, minimum height=3em}]
    \node [block] (opti) {Optimization \\ problem };
    \node [block, below of=opti] (GMP) {LP on \\ measures };
    \node [block, below of=GMP] (GMPmoms) {LP on \\ moments };
    \node [block, below of=GMPmoms] (relax) {Semi-definite \\  relaxations};
    \node [left of=opti]   (ghostL1) {};
    \node [left of=relax](ghostL2) {};
    \node [right of=opti]   (ghostR1) {};
    \node [right of=relax](ghostR2) {};
    \node [below left of=relax](ghostB1) {};
    \node [below right of=relax](ghostB2) {};

    \path (opti) edge (GMP);
    \path (GMP) edge  (GMPmoms);
    \path (GMPmoms) edge (relax);
    \path (ghostL1)  edge node[left, rotate=90, anchor=south] {Convex lift} (ghostL2);
    \path (ghostR2)  edge node[right, rotate=90, anchor=north] {Inverse problem} (ghostR1);
    \path (ghostB1)  edge node[above] {Numeric optimization} (ghostB2);
    
\end{tikzpicture}
\caption{The moment approach. An optimization problem is relaxed as semi-definite relaxations, solved numerically. The solution of the original problem is found by solving an inverse problem from the optimized data.}
\label{fig:momentApproach}
\end{figure}
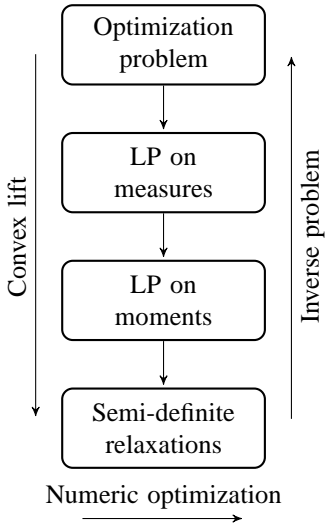

After numeric resolution of one of those relaxations, it is of obvious interest to assert termination of the hierarchy, as well as extract from moment data the solution of the measure LP, which in turn must be interpreted in terms of solutions of the original problem. This last step of the method is a typical example of an inverse problem.

For polynomial, finite-dimensional optimization as proposed in \cite{Lasserre2000optiGlob}, the full approach is now well mastered. From the optimized moment data, one can expect generically at a finite relaxation order \cite{Nie2012hierarchy} to recover the support of the measure, which is located at the (generically unique) optimal point. Numerical tools allow for the resolution of such polynomial problems by the mere definition of the problem (see for instance \cite{GloptiPoly}), for small size problems. 

For Optimal Control Problems (OCP), on the other hand, most recent work has been focused on the convex lift and optimization step of the approach, for various classes of systems: from the case of bounded controls as presented in Lasserre et al. \cite{Lasserre2008NLOCP}, the approach was subsequently extended to impulsive controls \cite{impulseTAC} or switched systems \cite{Henrion2013Switched}. However, very little work has been done on the inverse problem, which precludes the same ``black box'' operations as for polynomial optimization. To the author's knowledge, only exploitation of the dual problem as hinted in \cite{Lasserre2008NLOCP} or the resolution of an inverse problem with very limited moment data \cite{mevissen} have been investigated. We present briefly those methods and compare them to the one presented on this paper in \S\ref{sec:example}.

Several obstacles related to the nature of solutions of OCPs explain such a gap.
First of all, trajectories are inherently infinite-dimensional objects,  such that one can only expect to have approximate moments to the measure LP as a result of any given \emph{finite order} moment relaxation. In addition, even if those moments were exact, their finite number prevents the exact reconstruction of an infinite dimensional object. Therefore, only approximate optimal trajectory and control law can be recovered. This is especially true for the control, which is expected to be highly discontinuous, switching in between bang, path-constrained and singular arcs.

Due to the restrictions mentioned above, we propose a numeric approach to reconstruct trajectories. In much of the spirit of the moment approach itself, we propose a convex program for extracting trajectory/control way points. As such, the scheme is guaranteed to converge if an admissible solution exists, and users should not specify any starting point for the method. The method is based on the approach developed by Rubio (see \cite{Rubio1986Book} and the references therein), which uses a linear program to solve the measure LP directly. However, the key difference here is that in our approach, only an inverse problem must be solved, whereas the actual trajectory optimization is performed through semi-definite relaxations. The segregation between optimization and inverse problem allow the overcome several difficulties arising in those earlier works.

As a last step, the obtained way points are used as starting data of a direct optimal control routine. This allows for the reconstructing of a trajectory to any given precision. As a result, this also equips to approach with a termination criterion; At any given relaxation order, if the cost of the solution of the direct method agrees with some prescribed tolerance to the cost of the moment relaxation, the solution is certified as globally optimal.

The idea of using a global method for hot-starting local optimal control methods is surprisingly not well studied in the literature. We explain this with the fact that the numeric resolution of the Hamilton-Jacobi-Bellman (HJB) equation is an endeavor that can be hardly undertaken gradually in function of computational power available, as bounds on the discretization grid must be enforced to avoid numerical instability. In addition, once the solution is available, only those applications requiring high-precision solutions truly need a second  round of computations from a local method. This is the motivation behind Cristiani and Martinon \cite{Cristiani10initialization}, who propose to solve the HJB equation on a coarse grid to initialize an indirect method exploiting Pontryagin's maximum principle \cite{PMP}, via their well-known relationship (see e.g. \cite{Cernea} in the constrained case).

In contrast with these limitations, moment relaxations of low order are very cheap to compute, and as shown in the examples, are usually tight enough to hot start a local method in a sufficiently close neighborhood of the optimal solution. Up to the author's knowledge, this is the first time that moment relaxations are used for such a purpose.

\section*{Notations}

Let $\mathbf{Z} \in \R^n$ be a compact set of an Euclidean space. We note by $\PositiveMeasures(\mathbf{Z})$ the space of finite, positive measures supported on $\mathbf{Z}$, equipped with the weak-$*$ topology.
For a continuous function $f(z) \in C(\mathbf{Z})$, denote by $\int_{\mathbf{Z}} \! f(z) \, \mu(dz)$ the integral of $f(z)$ by the measure $\mu \in \mathcal{M}^+(\mathbf{Z})$.
When no confusion may arise, we note $\langle f, \mu \rangle$ for the integral to simplify exposition and to insist on the duality relationship between continuous functions and measures. The Dirac measure supported at $z^*$ is denoted by $\delta_{z^*}$.

For multi-index $\alpha \in \N^n$ and vector $z\in \R^n$, we use the notation $z^\alpha := \prod_{i=1}^n z_i^{\alpha_i}$. Denote by $\N_m^n$ the set $\lbrace \alpha \in \N^n : \; \sum_{i=1}^n \alpha_i \leq m \rbrace$.
The moment of multi-index $\alpha \in \N^n$ of measure $\mu \in \PositiveMeasures(\mathbf{Z} \subset \R^n)$ is then defined as the real $y_\alpha = \langle z^\alpha, \mu \rangle$. A multi-indexed sequence of reals $\lbrace y_\alpha\rbrace_{\alpha \in \N^n}$ is said to have a \emph{representing measure} on $\mathbf{Z}$ if there exists $\mu \in \PositiveMeasures(\mathbf{Z})$ such that $y_\alpha = \langle z^\alpha , \mu \rangle$ for all $\alpha \in \N^n$.

Denote by $\R[z]$ the ring of polynomials in the variables $z$. A set $\mathbf{Z} \in \R^n$ is basic semi-algebraic if it is defined as the intersection of finitely many polynomial inequalities: $\mathbf{Z} := \lbrace z \in \R^n : \; g_i(z) \geq 0, \, g_i(z) \in \R[z], \, i = 1 \ldots n_{\mathbf{Z}}\rbrace$.

Finally, we use the notation $\xbar$ to denote parameters of the state space where trajectories $x(t)$ live. We use the same convention $\ubar$ for the controls $u(t)$. This notation makes the passage from temporal integration to integration with respect to a measure transparent.

\section{Moment optimization for optimal control} \label{sec:OCP}
This section outlines the main steps of the convex lift step of the moment approach for bounded OCPs as presented in \cite{Lasserre2008NLOCP}. We only highlight the important features necessary for the following sections.

Consider the following end-constrained\footnote{Note that the end-point constraints considered here are for ease of exposition, see \cite{Lasserre2008NLOCP} for the general case.} problem:
\begin{equation}
\label{eq:OCP}
\begin{aligned}
J = \inf_{u(t)} \; & \int_{t_i}^{t_f} \!\!\! h(t,x,u) \, \diff t \\
\text{s.t.} \; & \dot{x} = f(t,x,u), \\
& x(t_i), \; x(t_f) \; \text{given}, \\
& x(t) \in \mathbf{X}, \quad u(t) \in \mathbf{U}, \\
& t \in \mathbf{T} := [t_i,t_f],
\end{aligned}
\end{equation}
where the state $x(t) \in \R^n$, controls $u(t) \in \R^m$ and functions $f,h \in \R[t,\xbar,\ubar]$. Sets $\mathbf{X}$ and $\mathbf{U}$ are compact, basic semialgebraic sets, and are assumed w.l.g. to be contained in a ball included in the algebraic definition of those sets.

For a process $(u(t),x(t))$ admissible for \eqref{eq:OCP}, the \emph{occupation measure} $\mu \in \mathcal{M}^+([\mathbf{T} \times \mathbf{U} \times \mathbf{X})$ is defined by:
\begin{equation}
\label{eq:occMeas}
    \mu(\mathbf{A} \times \mathbf{B} \times \mathbf{C}) := \int\limits_{ \mathbf{A}} \!\! \delta_{u(t)}(\mathbf{B}) \, \delta_{x(t)}(\mathbf{C}) \; \mathrm{d}t,
\end{equation}
where $\mathbf{A}$, $\mathbf{B}$ and $\mathbf{C}$ are Borel subsets of resp. $\mathbf{T}$, $\mathbf{U}$ and $\mathbf{X}$. 

When evaluating a continuously differentiable test function $v(t,\xbar)$ along such an admissible trajectory, straightforward computations reveal that the occupation measure is admissible for the following measure LP, while achieving the same cost as \eqref{eq:OCP}:
\begin{equation}
\label{eq:measureLP}
\begin{aligned}
J_{LP} = \inf_{\mu} \; & \langle h, \mu \rangle \\
\text{s.t.} \; & \forall v \in \R[t,\xbar]: \; \left[ v( \cdot, x(\cdot))\right]_{t_i}^{t_f} = \langle \frac{\partial v}{\partial t} + \frac{\partial v}{\partial \xbar}  f , \mu \rangle, \\
& \mu \in \PositiveMeasures(\mathbf{T} \times \mathbf{U} \times \mathbf{X}).
\end{aligned}
\end{equation}
Obviously, since the admissible elements have been enlarged, $J_{LP} \leq J$. It is however expected generically that $J_{LP} = J$ hold, see e.g. the discussion in \cite{roa}.

As problem data was assumed polynomial, viz. $f,h \in \R[t,\xbar,\ubar]$ and $\mathbf{X}$ as well as $\mathbf{U}$ are basic, semi-algebraic set, a dual to Putinar's theorem (see \cite[Th. 3.8b]{Lasserre2009Moments}) allows to handle measures by their moment sequences. One obtains a finite-dimensional convex relaxation of \eqref{eq:measureLP} by truncating the problem with only the first few moments as decision variables, as well as only a few of the linear constraints. Denoting by $J_{SDP}^r$ the cost of the semi-definite relaxation considering moments of degree up to $2r$, \cite{Lasserre2008NLOCP} have proven that the cost of the relaxations converge from below to the cost of measure LP \eqref{eq:measureLP}:

\begin{theorem}[Lasserre et al.]
\begin{equation}
    \lim_{r \rightarrow \infty} J_{SDP}^{\, r} \uparrow J_{LP}.
\end{equation}
\end{theorem}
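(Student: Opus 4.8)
The plan is to establish the theorem in two logically separate pieces: monotonicity of the sequence $\{J_{SDP}^{\,r}\}_r$ and convergence of this sequence to $J_{LP}$. Monotonicity is the easier half. The relaxation of order $r+1$ carries strictly more decision variables (moments up to degree $2(r+1)$) and strictly more linear constraints (more test polynomials $v$ and more localizing/moment matrix conditions) than the relaxation of order $r$. Any feasible point of the order-$(r+1)$ relaxation restricts, by simply forgetting the higher moments, to a feasible point of the order-$r$ relaxation with the same cost $\langle h, \mu\rangle$. Hence the feasible set shrinks as $r$ grows, so the infimum cannot decrease: $J_{SDP}^{\,r} \leq J_{SDP}^{\,r+1}$. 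This gives the ``$\uparrow$'' in the statement. One must check that the truncation of the dynamic constraint \eqref{eq:measureLP} is nested in $r$, i.e.\ that the polynomial test functions used at order $r$ form a subset of those used at order $r+1$; this is a bookkeeping point about how the hierarchy is defined but presents no real difficulty.

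For the convergence itself, I would proceed by a sandwich argument. First I would argue $J_{SDP}^{\,r} \leq J_{LP}$ for every $r$: since the order-$r$ relaxation retains only \emph{some} of the constraints of the measure LP \eqref{eq:measureLP} and replaces moment sequences by their truncations, every genuine admissible measure $\mu$ for \eqref{eq:measureLP} yields a feasible truncated moment sequence at every order, so the relaxed infimum lies below $J_{LP}$. Combined with monotonicity, this shows the increasing sequence is bounded above by $J_{LP}$ and therefore converges to some limit $J^\star \leq J_{LP}$. The substance of the theorem is the reverse inequality $J^\star \geq J_{LP}$.

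The reverse inequality is where the real work lies, and it is the step I expect to be the main obstacle. The natural route is to take, for each $r$, a (near-)optimal truncated moment sequence $y^{(r)}$ achieving cost close to $J_{SDP}^{\,r}$, and to pass to a limit. Because $\mathbf{T}\times\mathbf{U}\times\mathbf{X}$ is compact and contained in a ball built into the semi-algebraic description, one has uniform bounds on all moments; a diagonal extraction then produces a single infinite sequence $y^\star = \{y_\alpha^\star\}_{\alpha\in\N^n}$ that is the entrywise limit of (a subsequence of) the $y^{(r)}$. The crux is to show $y^\star$ admits a representing measure $\mu^\star \in \PositiveMeasures(\mathbf{T}\times\mathbf{U}\times\mathbf{X})$. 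This is exactly where Putinar's theorem (the dual form cited as \cite[Th. 3.8b]{Lasserre2009Moments}) enters: the moment and localizing matrix positivity conditions, satisfied in the limit because they are satisfied at each finite order, guarantee the existence of such a $\mu^\star$ on the basic semi-algebraic set. The compactness built into the algebraic definition of $\mathbf{X}$ and $\mathbf{U}$ is precisely the Archimedean hypothesis that makes Putinar applicable, which is why that assumption was recorded in the problem statement.

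Finally I would verify that $\mu^\star$ is feasible for the measure LP and that it realizes the limit cost. Feasibility of the dynamic constraint follows because each finite-degree test polynomial $v$ appears in the hierarchy from some order onward, and the associated linear constraint passes to the limit by continuity of the (finite) linear functionals in the entrywise-convergent moments; positivity of $\mu^\star$ is part of the representing-measure conclusion. Since $h$ is a fixed polynomial, $\langle h, \mu^\star\rangle = \lim_r \langle h, y^{(r)}\rangle = J^\star$. Thus $\mu^\star$ is admissible for \eqref{eq:measureLP} with cost $J^\star$, forcing $J_{LP}\leq J^\star$ and closing the sandwich to give $J^\star = J_{LP}$. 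The delicate points throughout are the uniform moment bounds (requiring the boundedness of the domain) and the correct matching of truncation orders so that the limiting positivity conditions are strong enough to invoke Putinar; these, rather than the cost-passage, are the technical heart of the argument.
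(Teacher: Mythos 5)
The paper itself contains no proof of this theorem: it is imported verbatim from \cite{Lasserre2008NLOCP}, so there is no in-paper argument to compare against, only the proof in that reference. Your sketch is essentially that standard proof and is sound: monotonicity and the bound $J_{SDP}^{\,r} \leq J_{LP}$ by constraint inclusion, uniform moment bounds (using that the mass $y_0$ is pinned by the test function $v=t$ and that the Archimedean ball constraint is part of the algebraic description), diagonal extraction of a limiting moment sequence, Putinar's theorem to produce a representing measure on the compact set, and passage to the limit in each (finite-degree) linear constraint and in the cost. The only refinement worth making explicit is the one you already flag: positivity of the truncated moment and localizing matrices survives entrywise limits because, for each fixed truncation order, these are principal submatrices of the order-$r$ matrices for all large $r$.
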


The following corollary is immediate, and provides the basis for the termination criterion of \S\ref{sec:inverse}:
\begin{corollary}
\label{th:termination}
Let an admissible pair $(u(t),x(t))$ have the same cost as a moment relaxation. Then the solution is globally optimal.
\end{corollary}

For the rest of the paper, we make the following standing assumptions, to ease exposition:
\begin{assumption}
\label{th:uniqueness}
Problem \eqref{eq:OCP} admits a unique optimal process.
\end{assumption}
Note that assumption \ref{th:uniqueness} is generic, and can be enforced almost surely by perturbing randomly the coefficients of the polynomials data, as done for polynomial optimization in the \texttt{SparsePOP} toolbox \cite{SparsePOP}. Note that as an immediate corollary, Assumption \ref{th:uniqueness} imposes the uniqueness of the optimal measure by \cite[Cor. 1.4]{Vinter1993convexDuality}, and since measures are moment determinate on compact sets, uniqueness of optimal moment sequences are guaranteed as well. The next sections explore how this unique optimal measure can be reconstructed from its moments data. That is, given definition \eqref{eq:occMeas}, we propose an algorithm for reconstructing the approximate support of $\mu$.

\section{Atomic approximation of occupation measures}
\label{sec:atomic}

This section outlines convergence results for approximating occupation measures by atomic measures, that is, measures supported on a finite number of points only. In the next sections, it is shown how trajectories can be recovered from these approximate measures.

At any given relaxation, only a finite subset of the moment constraints of \eqref{eq:measureLP} can be taken into account numerically. As a result, if the optimized moment sequence possesses a representing measure, there always exists an atomic measure with the exact same moments \cite[Th. B.12]{Lasserre2009Moments}:
\begin{theorem}[Tchakaloff]
\label{th:Tchaka}
Let $\mu$ be a finite, positive, Borel measure with compact support $\mathbf{Z} \subset \R^q$, and let $d\geq 1$ be a fixed positive integer. Then there exists $p \leq \binom{ q + d }{ q }$ points $z_k \subset \mathbf{Z}$ and positive weights $w_k$ such that
\begin{equation}
\langle f, \mu \rangle = \sum_{k=1}^p w_k \, f(z_k)
\end{equation}
for every polynomial $f \in \R[z]$ of degree at most $d$.
\end{theorem}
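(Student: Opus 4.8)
The plan is to prove Tchakaloff's theorem via a dimension-counting argument on the convex cone (or convex hull) of moment vectors, invoking Carathéodory's theorem as the key combinatorial engine. Consider the moment map $\Phi : \mathbf{Z} \to \R^N$ that sends each point $z \in \mathbf{Z}$ to the vector of all monomials $z^\alpha$ of degree at most $d$, where $N = \binom{q+d}{q}$ is the number of such monomials. The moment vector of $\mu$, namely the collection $\langle z^\alpha, \mu\rangle$ for $|\alpha| \le d$, is precisely the integral $\int_{\mathbf{Z}} \Phi(z)\, \mu(dz) \in \R^N$. The goal is then to express this single vector as a finite positive combination of the values $\Phi(z_k)$, which is exactly the claimed atomic representation.

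\medskip
\noindent\textbf{Step 1: Identify the moment vector as an element of a convex cone.}
First I would observe that, since $\Phi$ is continuous and $\mathbf{Z}$ is compact, the image $\Phi(\mathbf{Z})$ is a compact subset of $\R^N$. I would then argue that the moment vector $y = \int_{\mathbf{Z}} \Phi(z)\,\mu(dz)$ lies in the convex cone $\mathcal{C}$ generated by $\Phi(\mathbf{Z})$. Intuitively, $y$ is an average (up to the total mass of $\mu$) of points of $\Phi(\mathbf{Z})$, so it belongs to the closed convex hull scaled by the mass, hence to $\mathcal{C}$. Making this rigorous requires a short separating-hyperplane argument: if $y$ were outside the closed cone, there would be a linear functional, i.e.\ a polynomial $c(z) = \sum_\alpha c_\alpha z^\alpha$ of degree at most $d$, nonnegative on all of $\Phi(\mathbf{Z})$ yet with $\langle c, y\rangle < 0$; but $\langle c, y\rangle = \int_{\mathbf{Z}} c(z)\,\mu(dz) \ge 0$ since $c \ge 0$ on $\mathbf{Z}$ and $\mu$ is positive, a contradiction.

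\medskip
\noindent\textbf{Step 2: Apply Carathéodory's theorem in the cone.}
Once $y \in \mathcal{C}$, the conical version of Carathéodory's theorem guarantees that $y$ can be written as a nonnegative combination of at most $N$ extreme generators $\Phi(z_k)$ with $z_k \in \mathbf{Z}$ and weights $w_k \ge 0$, so that $y = \sum_{k=1}^{p} w_k\, \Phi(z_k)$ with $p \le N = \binom{q+d}{q}$. Reading this coordinate by coordinate gives $\langle z^\alpha, \mu\rangle = \sum_{k=1}^p w_k\, z_k^\alpha$ for every $|\alpha| \le d$, and by linearity the identity $\langle f, \mu\rangle = \sum_k w_k f(z_k)$ extends to every polynomial $f$ of degree at most $d$, as required.

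\medskip
\noindent\textbf{Main obstacle.}
I expect the delicate point to be the precise topological bookkeeping in Step~1, namely establishing that the moment vector lies in the \emph{closed} cone generated by $\Phi(\mathbf{Z})$ and that Carathéodory applies cleanly. The subtlety is that a convex cone generated by a compact set need not itself be closed in general, so one must either use the compactness of $\Phi(\mathbf{Z})$ together with the fact that $\mathbf{0}$ is separated appropriately, or work directly with the convex hull of the scaled set $\{\,(1,\Phi(z)) : z \in \mathbf{Z}\,\}$ in $\R^{N+1}$ (appending the constant monomial to track total mass), whose convex hull \emph{is} compact and hence closed. This homogenization trick sidesteps closure issues entirely and lets the finite-support representation fall out of Carathéodory applied to a compact convex set; the bound $p \le N$ follows from counting the monomial coordinates.
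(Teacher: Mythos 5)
The paper does not actually prove this statement: it is imported verbatim as a known result, with the citation \cite[Th.\ B.12]{Lasserre2009Moments}, so there is no in-paper proof to compare against. Your argument is correct and is essentially the classical proof of Tchakaloff's theorem that the cited reference gives: push $\mathbf{Z}$ through the degree-$d$ moment map $\Phi$, show that the truncated moment vector of $\mu$ lies in the convex cone generated by the compact set $\Phi(\mathbf{Z})$, and conclude by the conical Carath\'eodory theorem, whose dimension count yields the bound $p \leq \binom{q+d}{q}$ (rather than the weaker $\binom{q+d}{q}+1$ that the convex-hull version would give). You also correctly isolate the one genuinely delicate point --- a cone generated by a compact set need not be closed, so separation alone only places the moment vector in the \emph{closure} of the cone --- and your fix is the standard one: since the constant monomial makes every $\Phi(z)$ have a coordinate equal to $1$, the compact convex hull of $\Phi(\mathbf{Z})$ avoids the origin, hence the cone it generates is closed; equivalently one applies Carath\'eodory to the compact convex hull inside the affine hyperplane where that coordinate equals $1$. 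With that observation filled in, the proposal is a complete and correct proof.
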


To obtain a finite dimensional LP, we therefore propose fixing a time-space grid of Dirac measures, as proposed in \cite{Rubio1986Book} for solving \eqref{eq:measureLP} directly. As such the only decision variables are then the mass of the Dirac measures, which enter linearly in the problem. The crucial difference here with \cite{Rubio1986Book} is that we do not attempt to find an optimal measure, since its (approximate) moments are already given by the semi-definite relaxations. Instead, by segregating optimization and approximation, we circumvent the problem encountered in \cite{Rubio1986Book}, where there might not be a given atomic measure on a given grid satisfying the truncated moment constraints within a given tolerance. We also prevent the distortion of solutions when prescribed moment tolerances are too loose.

Denote by $\mathbf{Z}_\varepsilon \subset \mathbf{Z} \subset \mathbf{T} \times \mathbf{U} \times \mathbf{X}$ a mesh of given resolution $\varepsilon$ supported on finitely many points, such that for all $z \in \mathbf{Z}$, there exists $z_i \in \mathbf{Z}_\varepsilon$ such that $\lvert z_i - z \rvert \leq \varepsilon$. By compactness of $\mathbf{Z}$, such a $\mathbf{Z}_\varepsilon$ always exists.

\begin{theorem}
Consider the following LP:
\begin{equation}
\label{eq:approxLP}
\begin{aligned}
\lambda_\varepsilon^* = \min_{\tilde{\mu}, \lambda}  \; & \lambda \\
\text{s.t.} \; &   \vert {y}_\alpha - \langle z^\alpha, \tilde{\mu} \rangle \rvert \leq \lambda, \quad \forall \alpha \in \N^{1+m+n}_{2r}\\
& \tilde{\mu} \in \PositiveMeasures ( \mathbf{Z}_\varepsilon ),
\end{aligned}
\end{equation}
where ${y}_\alpha$ are given moments of a representing measure $\mu \in \PositiveMeasures(\mathbf{Z})$.
Then, as the mesh is refined, moments of the approximate atomic measure $\tilde{\mu}$ converge to those of $\mu$:
\begin{equation}
\lim_{\varepsilon \rightarrow 0} \lambda_\varepsilon^* \rightarrow 0.
\end{equation}
\end{theorem}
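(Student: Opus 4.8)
The plan is to exhibit, for each $\varepsilon$, an explicit feasible point of \eqref{eq:approxLP} whose objective value tends to zero, and then invoke optimality. The construction rests on Tchakaloff's theorem (Theorem \ref{th:Tchaka}): applying it with $q = 1+m+n$ and $d = 2r$ to the representing measure $\mu$ yields a finite set of atoms $z_k \in \mathbf{Z}$ and positive weights $w_k$, $k = 1, \dots, p$, with $p \leq \binom{q+2r}{q}$, such that $y_\alpha = \sum_{k=1}^p w_k z_k^\alpha$ holds \emph{exactly} for every $\alpha \in \N_{2r}^{\,q}$. The essential feature I would exploit below is that both the number of atoms $p$ and the total mass $\sum_k w_k = y_0 = \mu(\mathbf{Z})$ are bounded independently of $\varepsilon$.

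Next I would snap each atom onto the grid. Since $\mathbf{Z}_\varepsilon$ covers $\mathbf{Z}$ at resolution $\varepsilon$, for each $z_k$ there is a grid point $z_k^\varepsilon \in \mathbf{Z}_\varepsilon$ with $\lvert z_k^\varepsilon - z_k \rvert \leq \varepsilon$. Define the trial measure $\tilde{\mu}_\varepsilon := \sum_{k=1}^p w_k \, \delta_{z_k^\varepsilon}$, which, having positive weights and support on the mesh, belongs to $\PositiveMeasures(\mathbf{Z}_\varepsilon)$ and is therefore admissible for \eqref{eq:approxLP}.

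To bound the induced moment mismatch, I would use that each monomial $z \mapsto z^\alpha$ with $\lvert \alpha \rvert \leq 2r$ is continuous, hence Lipschitz, on the compact set $\mathbf{Z}$; since there are only finitely many such $\alpha$, there is a single modulus of continuity $\omega(\varepsilon) \to 0$ with $\lvert z^\alpha - w^\alpha \rvert \leq \omega(\varepsilon)$ whenever $\lvert z - w \rvert \leq \varepsilon$. Then, using the exact Tchakaloff identity,
\begin{equation}
\lvert y_\alpha - \langle z^\alpha, \tilde{\mu}_\varepsilon \rangle \rvert = \Big\lvert \sum_{k=1}^p w_k \big( z_k^\alpha - (z_k^\varepsilon)^\alpha \big) \Big\rvert \leq \Big( \sum_{k=1}^p w_k \Big)\, \omega(\varepsilon) = \mu(\mathbf{Z})\, \omega(\varepsilon),
\end{equation}
uniformly in $\alpha \in \N_{2r}^{\,q}$. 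Hence the pair $\big(\tilde{\mu}_\varepsilon, \, \mu(\mathbf{Z})\, \omega(\varepsilon)\big)$ is feasible for \eqref{eq:approxLP}, so by minimality $0 \leq \lambda_\varepsilon^* \leq \mu(\mathbf{Z})\, \omega(\varepsilon)$, and letting $\varepsilon \to 0$ delivers the claim.

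The argument is essentially routine once the right feasible point is chosen; the only point requiring care --- and the reason the result holds at all --- is that Tchakaloff supplies a representation with a \emph{fixed, $\varepsilon$-independent} number of atoms and total mass. This is what decouples the approximation error from the grid and prevents the mass from being smeared over ever more atoms as $\varepsilon \to 0$, which is precisely the pathology that could otherwise break the uniform bound. A secondary subtlety worth noting is that the construction only certifies convergence of the optimal \emph{objective} $\lambda_\varepsilon^*$; it says nothing yet about convergence of the optimizing measure $\tilde{\mu}$ itself, which would require a separate compactness (weak-$*$) argument.
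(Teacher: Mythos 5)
Your proof is correct and follows essentially the same route as the paper: invoke Tchakaloff's theorem to obtain a finite atomic representation of the truncated moments, then use continuity of the finitely many monomials to place an admissible measure on the grid with small moment mismatch, and conclude by minimality of $\lambda_\varepsilon^*$. Your write-up is in fact more explicit than the paper's (snapping atoms to grid points, the uniform modulus $\omega(\varepsilon)$, the mass bound $\mu(\mathbf{Z})$, and the correct inclusion of the weights $w_k$ that the paper's statement of the identity omits), but the underlying argument is identical.
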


\begin{proof}
Indeed, Th. \ref{th:Tchaka} asserts the existence  of an atomic measure supported on $\binom{1+n+m+2r}{2r}$ points $\lbrace z_k \rbrace$ such that $y_\alpha = \sum_k z_k^\alpha$, for all $\alpha \in \N_{2r}^{1+n+m}$. Therefore, for any $\lambda > 0$, by continuity of the monomials and the finite number of moment constraints, one can always find an $\epsilon > 0$ such that there exists a measure $\tilde{\mu} \in \PositiveMeasures(\mathbf{Z}_\epsilon)$ that is admissible for LP \eqref{eq:approxLP}. As $\lambda$ is arbitrary and 0 is a lower bound for the cost, this concludes the proof.
\end{proof}
See also the related developments in \cite[\S{}3]{Rubio1986Book}, which however try to approximate \eqref{eq:measureLP} directly by a LP. To optimize over the cost, moment tolerance $\lambda$ must then be fixed a priori, and carefully chosen to minimize distortions while guaranteeing the existence of an atomic approximation on the given grid -- a very difficult task.

Problem \eqref{eq:approxLP} can be interpreted as a truncation of a minimum norm problem, when the weak-$*$ topology is considered for measures. Indeed, on compact sets, it is sufficient to consider a dense basis of the continuous function equipped with the supremum norm. The monomials $z^\alpha$ are such a possible choice, such that the minimum norm problem would read:
\begin{equation}
\label{eq:weakMinNorm}
\inf_{\tilde{\mu} \in \PositiveMeasures ( \mathbf{Z}_\varepsilon )}
\; \sup_{\alpha \in \N^{1+n+m}} \lvert \langle z^\alpha ,  \tilde{\mu} - \mu \rangle \rvert.
\end{equation}
Clearly, \eqref{eq:approxLP} is just the truncation of \eqref{eq:weakMinNorm} down to a finite number of moments. This has an immediate consequence by the mere definition of weak-$*$ convergence, as relaxation order $r$ in \eqref{eq:approxLP} is increased:
\begin{corollary}
\label{th:assympConv}
$\tilde{\mu}$ converges weakly-$*$ to $\mu$ as $r \rightarrow \infty$ and $\varepsilon \rightarrow 0$.
\end{corollary}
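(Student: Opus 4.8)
The plan is to reduce the statement to the very definition of weak-$*$ convergence, namely that $\langle f, \tilde\mu\rangle \to \langle f, \mu\rangle$ for every $f \in C(\mathbf{Z})$, and to control each such integral using only finitely many moments together with the density of polynomials. First I would make the dependence on both parameters explicit, writing $\tilde\mu_{\varepsilon,r}$ for an optimizer of \eqref{eq:approxLP} and $\lambda^*_{\varepsilon,r}$ for its cost. Since the preceding theorem only guarantees $\lambda^*_{\varepsilon,r}\to 0$ as $\varepsilon\to 0$ for each \emph{fixed} $r$, the two limits cannot be iterated blindly; instead I would pass to a diagonal sequence by choosing, for every $r$, a resolution $\varepsilon(r)$ small enough that $\lambda^*_{\varepsilon(r),r}\le 1/r$, and set $\tilde\mu_r := \tilde\mu_{\varepsilon(r),r}$. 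Along this sequence every moment of order at most $2r$ is matched to within $1/r$.

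The key preliminary step is a uniform bound on the total mass. Because the constant monomial $z^0\equiv 1$ lies in $\N^{1+m+n}_{2r}$ for every $r$, the constraint in \eqref{eq:approxLP} gives $\lvert \mu(\mathbf{Z}) - \tilde\mu_r(\mathbf{Z})\rvert \le \lambda^*_{\varepsilon(r),r}\le 1/r$, whence $\sup_r \tilde\mu_r(\mathbf{Z}) \le \mu(\mathbf{Z})+1 < \infty$. This boundedness is precisely what will let the polynomial-approximation error be controlled uniformly in $r$.

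With these in hand I would run a three-term estimate. Fix $f\in C(\mathbf{Z})$ and $\eta>0$; by Stone--Weierstrass (the density of monomials on the compact set $\mathbf{Z}$ already invoked before \eqref{eq:weakMinNorm}) pick a polynomial $p=\sum_\alpha c_\alpha z^\alpha$ with $\lVert f-p\rVert_\infty \le \eta$. Then
\[
\lvert \langle f, \tilde\mu_r - \mu\rangle\rvert \le \lvert \langle f-p, \tilde\mu_r\rangle\rvert + \lvert\langle p, \tilde\mu_r - \mu\rangle\rvert + \lvert\langle p-f, \mu\rangle\rvert .
\]
The outer two terms are bounded by $\eta\bigl(\sup_r \tilde\mu_r(\mathbf{Z}) + \mu(\mathbf{Z})\bigr)$ via the mass bound, while for the middle term, once $2r\ge \degree p$ every monomial of $p$ is among the matched moments, so $\lvert\langle p, \tilde\mu_r-\mu\rangle\rvert \le \bigl(\sum_\alpha \lvert c_\alpha\rvert\bigr)\,\lambda^*_{\varepsilon(r),r}\to 0$. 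Sending $r\to\infty$ and then $\eta\to 0$ gives $\langle f, \tilde\mu_r\rangle \to \langle f, \mu\rangle$, which is weak-$*$ convergence.

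I expect the only genuine subtlety to be the joint, rather than iterated, nature of the two limits: the preceding theorem is stated for fixed $r$, so one must commit to a coupling $\varepsilon=\varepsilon(r)$ that shrinks fast enough to keep $\lambda^*_{\varepsilon(r),r}$ small while $r$ grows; everything else is routine. As an alternative that avoids the explicit estimate, one could instead use the mass bound together with Banach--Alaoglu to extract a weak-$*$ cluster point $\nu$ of $\{\tilde\mu_r\}$, observe that $\nu$ shares every moment of $\mu$, and conclude $\nu=\mu$ by moment determinacy on the compact set $\mathbf{Z}$ — the same determinacy already invoked after Assumption \ref{th:uniqueness}; since every cluster point equals $\mu$, the whole sequence converges.
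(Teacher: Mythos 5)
Your proof is correct and is in substance the argument the paper intends: the paper gives no separate proof, declaring the corollary immediate from viewing \eqref{eq:approxLP} as a truncation of the weak-$*$ minimum-norm problem \eqref{eq:weakMinNorm}, i.e.\ convergence of moments plus density of the monomials in $C(\mathbf{Z})$. The details you supply beyond that remark --- the diagonal coupling $\varepsilon(r)$ guaranteeing $\lambda^*_{\varepsilon(r),r}\to 0$ (which is indeed needed, since the iterated or joint limits are not otherwise controlled), and the uniform mass bound from the zeroth moment that lets you pass from polynomials to all of $C(\mathbf{Z})$ --- are exactly what a rigorous version of the paper's one-line justification requires, so you have completed the paper's route rather than taken a different one.
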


\section{Practical inverse problem}
\label{sec:inverse}

Corollary \ref{th:assympConv} shows the good asymptotic properties of the algorithm as the grid is refined and the number of moments is increased. Obviously, only a finite number of moments are supplied by any given moment relaxation, and the practical resolution of the LP imposes a maximum size on the approximation grid. From now on, we take a practical view on the inverse problem, working with a finite data set, and with possibly approximate/relaxed moment data as problem input. In the next section, this standing assumption of ``good enough'' moments and grid resolution will be tested a posteriori by hot-starting a direct method.

Fix a grid $\mathbf{Z}_\varepsilon$ indexed by $q$ points. Then \eqref{eq:approxLP} is rewritten as the following minimum norm problem parametrized by the (positive) weights $w_\beta$ associated to each atom $z_\beta \in \mathbf{Z}_\varepsilon, \beta=1, \ldots, q$:
\begin{equation}
\label{eq:approxLPexplicit}
\lambda_\epsilon^* = \min_{ \displaystyle w \in \R_+^q}  \; \lVert b - A \, w \rVert_\infty,
\end{equation}
with vector $b$ the truncated moment vector $\lbrace y_\alpha \rbrace$, $\alpha \in \N_{2r}^{1+m+n}$ returned by the semi-definite relaxation of order $r$, and $A$ a $\N_{2r}^{1+m+n} \times q$ matrix, whose elements are given by $A_{\alpha \beta} = z_\beta^\alpha$. This problem can be solved by standard linear programming routines. The support of the atomic measure -- which is expected to approximate closely the support of occupation measure \eqref{eq:occMeas} -- is recovered by the elements of $\mathbf{Z}_\varepsilon$ whose optimal weights are non-zero. By Ass. \ref{th:uniqueness}, those way-points closely approximate the unique optimal trajectory and controls. Note that for the numeric experiments below, it was observed to be more efficient to solve \eqref{eq:approxLPexplicit} by an interior point method and apply a threshold than to use a simplex-based algorithm.

There is nonetheless a severe restriction to this approach: the dimension of $\mathbf{T} \times \mathbf{X} \times \mathbf{U} \subset \R^{1+n+m}$ leads quickly to LPs of extremely large sizes, a direct consequence of the ``curse of dimensionality'' of dynamic programming, the conic dual of \eqref{eq:measureLP}. Indeed, in the opinion of the author, this is the second main restriction for using the numeric approach of \cite{Rubio1986Book} for solving \eqref{eq:measureLP} directly. Even with a coarse grid of $100$ points in each direction, a basic implementation of the algorithm would lead to a LP of a minimum of $10$ billion variables for a problem with $1$ control and $3$ states. Therefore, we propose a middle ground between the rigor and the better scalabilty of the moment approach, and the practical information on the support of the occupation measure given by the LP approach. From the data given by a relaxation, we only consider moments of the form $y_{i 0 \ldots 0 j 0 \ldots 0}$. That is we, use only moments of time and one of the states or the controls, such as to perform a coordinate-by-coordinate identification of trajectory and control time series. This way of proceeding keeps the size of the LPs small enough, such that the inverse problem has a negligible resolution time in comparison to that of the moment relaxation, while maintaining a sufficient resolution $\varepsilon$ of the grid.

%

Obviously, although likely to give excellent results even in the face of approximated and truncated data, this procedure does not guarantee admissibility of the reconstructed states and controls for OCP \eqref{eq:OCP}. To refine the numeric solutions, we propose to use those approximate time-series to hot-start a direct optimal control method. For all the numeric tests of the next section, we used the freely available software \texttt{BOCOP} \cite{BOCOP}. Corollary \ref{th:termination} then provides a numeric termination criterion for the moment method: if the cost of the solution given by the direct method is within a prescribed tolerance of the cost of the moment relaxation, the local solution is validated as globally optimal and the hierarchy of moment relaxations can be terminated. However, as shown in the next section, even non-tight relaxations are already enough to recover the global optimal solution via this approach, although those solutions can obviously not be certified to be optimal.

\section{Illustrative examples}
\label{sec:example}

We present now three illustrative examples. Moment relaxations were solved via \texttt{GloptiPoly} \cite{GloptiPoly}, using \texttt{SeDuMi} \cite{sedumi} as the semi-definite solver.

\subsection{Double integrator}
\label{sec:doubleint}

Consider the problem of driving the double integrator to the origin in minimum time:
\begin{equation}
\label{eq:doubleint}
\begin{aligned}
J = \inf_{u(t)} \; & t_f \\
\text{s.t.} \; & \begin{bmatrix} \dot{x}_1 \\ \dot{x}_2 \end{bmatrix} = \begin{bmatrix} 0 \\ u \end{bmatrix}, \\
& x(0) = \begin{bmatrix} 1 \\ 1 \end{bmatrix}, \; x(t_f) = \begin{bmatrix} 0 \\ 0 \end{bmatrix}, \\
& u(t) \in [-1,1].
\end{aligned}
\end{equation}

Figures \ref{fig:doubleintx1} to \ref{fig:doubleintu} present the optimal solution, along with the reconstructed trajectory following a coordinate-by-coordinate identification as proposed in \S\ref{sec:inverse}. Moments of the fourth-order relaxation were used, since they closely approximate the true moments of the optimal occupation measure. As one can see, the reconstructed trajectories and controls closely match the true solution. 

\begin{figure}
\centering
\input{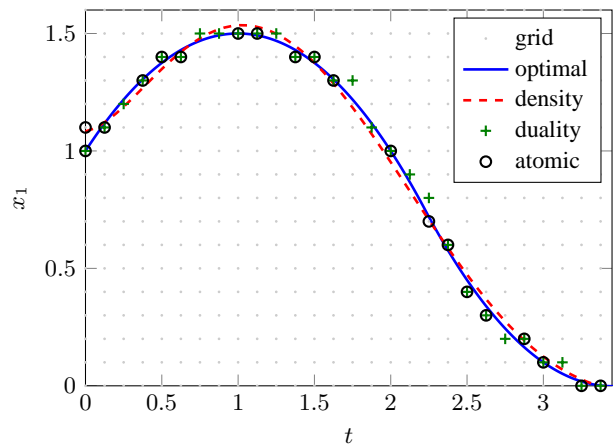}
\caption{Reconstruction of state $x_1$ of the example of \S\ref{sec:doubleint}. Optimal solution versus reconstructions via polynomial densities , dual HJB exploitation and atomic approximations.}
\label{fig:doubleintx1}
\end{figure}

\begin{figure}
\centering
\input{Figures/x2.tikz}
\caption{Reconstruction of state $x_2$ of the example of \S\ref{sec:doubleint}. Optimal solution versus reconstructions via polynomial densities , dual HJB exploitation and atomic approximations.}
\label{fig:doubleintx2}
\end{figure}

\begin{figure}
\centering
\input{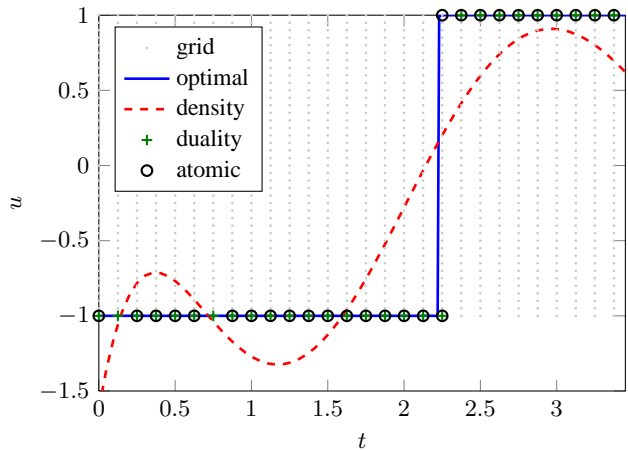}
\caption{Reconstruction of control $u$ of the example of \S\ref{sec:doubleint}. Optimal solution versus reconstructions via polynomial densities , dual HJB exploitation and atomic approximations.}
\label{fig:doubleintu}
\end{figure}

Figures \ref{fig:doubleintx1} to \ref{fig:doubleintu} also compare the proposed atomic reconstruction approach with methods based on the results of \cite{Lasserre2008NLOCP} and \cite{mevissen}. The approach hinted in \cite{Lasserre2008NLOCP} consists in extracting dual solutions to each moment relaxation, which give a polynomial (sub)approximation $V$ to the value function satisfying, on $\mathbf{T} \times \mathbf{X} \times \mathbf{U}$,
\begin{equation}
\label{eq:compl}
\frac{\partial V}{\partial t} + \frac{\partial V}{\partial x}  f + h \geq 0.
\end{equation}
Around the optimal trajectories, equality in \eqref{eq:compl} must hold. We therefore propose a very coarse exploitation of such results: for each time $t_i$ in a given discretization of the time interval, find the couple $(u(t_i), x(t_i))$ minimizing  $\frac{\partial V}{\partial t} + \frac{\partial V}{\partial x}  f + h$. We did so by exhaustive search on the same time-state-control grid used by the atomic approximation technique, for comparison purposes. Obviously, the main drawback of the approach is the curse of dimensionality, the example presented with $1$ control and $2$ states being already computationally intensive. The method gives excellent result for reconstructing the state, although slightly less precise than the faster atomic approximation.

Finally, the approach in \cite{mevissen} considers moments of the form $y_{i 0 \ldots 0 1 0 \ldots 0}$ to approximate occupation  measures by absolutely continuous (with respect to the Lebesgue measure on time) measures with polynomial densities. This results in a simple linear system to solve. As can be seen in Fig.~\ref{fig:doubleintx1}, the method allows for a correct approximation of the first state, while the second state in Fig.~\ref{fig:doubleintx2} cannot capture properly the switch of velocity. The picture is worse for the control, as seen in Fig.~\ref{fig:doubleintu}, as the polynomial density fails to identify any structure such as bang arcs, and gives controls below $-1$, hence non-admissible. Such behavior is of course expected from a polynomial approximation of a discontinuous function. In addition, this method simply fails if several solutions are encoded in an occupation measures, since the recovered density will be a weighted mean of those solutions. On the other hand, the atomic approximation measure will be supported on all these trajectories. As long as their number is finite, they could be extracted form the approximate measures, albeit with a more sophisticated technique to identify optimal arcs individually.

%
%

As a summary, the atomic approximation combines computational effectiveness with a good approximation quality, especially considering admissibility of reconstructed trajectories and detection of control structures.

\subsection{Non-convex integrator}
\label{sec:nonconvex}
Consider the problem:
\begin{equation}
\label{eq:nonconvexint}
\begin{aligned}
J = \inf_{u(t)} \; & \int_0^1 \!\! x^2 \, \diff t \\
\text{s.t.} \; & \dot{x} = u, \\
& x(0) = 0, \quad x(1) = \nicefrac{3}{4}, \\
& x(t) \in [-1,1], \\
& u(t) \in [1,1], \\
& \left( x(t) - \nicefrac{1}{5} \right)^2 + \left( t - \nicefrac{1}{2} \right)^2 \geq \left( \nicefrac{1}{5} \right)^2.
\end{aligned}
\end{equation}
The last constraint represents a time-dependent obstacle to be avoided, with the cost favoring solution passing below it. However, these solutions cannot be admissible, as the end-point constraints will not be satisfied, and the optimal solution necessarily  passes above the obstacle (see Fig.~\ref{fig:nonconvex}). This interplay between non-convex constraints and the cost make this problem very difficult to initialize for local methods, as a few tries with solver \texttt{BOCOP} \cite{BOCOP} may reveal.

We now show that the method proposed can be used efficiently to initialize the local method. Figure \ref{fig:nonconvex} presents the extracted trajectories from moments of the second relaxation (cost of $0.141$), the sixth relaxation (cost of $0.164$) and the optimal solution (cost of $0.176$). As expected, the sixth relaxation offers a finer reconstruction of the optimal trajectory. However, it should be noted that the very coarse trajectory reconstructed from the second relaxation is already enough to initialize the direct method implemented in \texttt{BOCOP} so that it converges to the global optimum. This suggests that the method proposed in this paper could be widely used for hot starting local methods, as lower-order relaxations are very cheap to compute. This is in contrast with using Hamilton-Jacobi-Bellman solvers as in \cite{Cristiani10initialization}, which offer little possibility of trading solution accuracy with computational load.

\begin{figure}
\centering
%
%
%
\definecolor{mycolor1}{rgb}{0.00000,0.49804,0.00000}%
\begin{tikzpicture}

\begin{axis}[%
width=\figurewidth,
height=\figureheight,
scale only axis,
xmin=0,
xmax=1,
xlabel={t},
ymin=-0.1,
ymax=0.8,
ylabel={x(t)},
legend style={at={(0.03,0.97)},anchor=north west,draw=black,fill=white,legend cell align=left}
]
\addplot [color=red,dashed,line width=1.0pt]
  table[row sep=crcr]{0.7	0.2\\
0.699597335294377	0.212684783931313\\
0.698390962566159	0.22531849071475\\
0.696385739452541	0.237850248872082\\
0.693589740279271	0.250229597436216\\
0.690014223548189	0.262406689139697\\
0.685673586603215	0.274332491132066\\
0.680585307657324	0.285958982417834\\
0.674769875413957	0.297239347220094\\
0.668250706566236	0.30812816349112\\
0.661054051506212	0.318581585810928\\
0.653208888623796	0.328557521937308\\
0.644746807621014	0.338015802296422\\
0.635701882311426	0.346918341731507\\
0.626110533416904	0.355229292858351\\
0.61601138191424	0.362915190410067\\
0.605445093522101	0.369945085989903\\
0.594454214954537	0.376290672689516\\
0.583083002600377	0.381926399070904\\
0.571377244318374	0.386829572053021\\
0.559384075065655	0.390980448288815\\
0.547151787101886	0.394362313664708\\
0.534729635533386	0.396961550602442\\
0.522167639980202	0.398767692892251\\
0.509516383164748	0.399773467836602\\
0.496826807233038	0.399974825534775\\
0.484150008628642	0.399370955190389\\
0.471537032345343	0.397964288376187\\
0.459038666386962	0.395760489242956\\
0.446705237261993	0.392768431711988\\
0.434586407336516	0.389000163742934\\
0.422730974861374	0.384470858820916\\
0.411186677478845	0.379198754858267\\
0.4	0.373205080756888\\
0.389215987226778	0.366513970926954\\
0.378878062572467	0.359152368106166\\
0.369027853210943	0.351149914870852\\
0.359705022458736	0.342538834275773\\
0.350947110064849	0.333353800103258\\
0.342789381051443	0.323631797244121\\
0.335264683714033	0.313411972772554\\
0.328403317353005	0.302735478314681\\
0.322232910269015	0.291645304345482\\
0.316778308513586	0.280186107081323\\
0.312061475842818	0.268404028665134\\
0.308101405277101	0.256346511368286\\
0.304914042622919	0.244062106557308\\
0.302512222264721	0.23160027919467\\
0.300905615485383	0.219011208660837\\
0.300100691523363	0.206345586699614\\
0.300100691523363	0.193654413300386\\
0.300905615485383	0.180988791339163\\
0.302512222264721	0.16839972080533\\
0.304914042622919	0.155937893442692\\
0.308101405277101	0.143653488631714\\
0.312061475842818	0.131595971334866\\
0.316778308513586	0.119813892918677\\
0.322232910269015	0.108354695654518\\
0.328403317353005	0.0972645216853187\\
0.335264683714033	0.0865880272274459\\
0.342789381051442	0.076368202755879\\
0.350947110064849	0.0666461998967417\\
0.359705022458736	0.0574611657242275\\
0.369027853210943	0.0488500851291483\\
0.378878062572467	0.0408476318938336\\
0.389215987226778	0.0334860290730457\\
0.4	0.0267949192431123\\
0.411186677478845	0.0208012451417328\\
0.422730974861374	0.0155291411790837\\
0.434586407336516	0.0109998362570663\\
0.446705237261993	0.00723156828801161\\
0.459038666386962	0.00423951075704426\\
0.471537032345343	0.00203571162381347\\
0.484150008628642	0.00062904480961154\\
0.496826807233038	2.51744652249863e-005\\
0.509516383164748	0.000226532163398407\\
0.522167639980202	0.00123230710774919\\
0.534729635533386	0.00303844939755837\\
0.547151787101885	0.00563768633529163\\
0.559384075065655	0.00901955171118524\\
0.571377244318374	0.0131704279469786\\
0.583083002600377	0.0180736009290963\\
0.594454214954536	0.0237093273104835\\
0.605445093522101	0.0300549140100971\\
0.61601138191424	0.0370848095899328\\
0.626110533416904	0.0447707071416486\\
0.635701882311426	0.0530816582684933\\
0.644746807621014	0.0619841977035775\\
0.653208888623795	0.071442478062692\\
0.661054051506212	0.081418414189072\\
0.668250706566236	0.0918718365088805\\
0.674769875413957	0.102760652779906\\
0.680585307657324	0.114041017582166\\
0.685673586603214	0.125667508867934\\
0.690014223548189	0.137593310860303\\
0.693589740279271	0.149770402563784\\
0.696385739452541	0.162149751127918\\
0.698390962566159	0.17468150928525\\
0.699597335294377	0.187315216068687\\
0.7	0.2\\
};
\addlegendentry{obstacle};

\addplot [color=blue,solid,line width=1.0pt]
  table[row sep=crcr]{0	0\\
0.01	1.51841e-006\\
0.02	0.00283129\\
0.03	0.0128297\\
0.04	0.0228296\\
0.05	0.0328295\\
0.06	0.0428295\\
0.07	0.0528295\\
0.08	0.0628295\\
0.09	0.0728295\\
0.1	0.0828294\\
0.11	0.0928294\\
0.12	0.102829\\
0.13	0.112829\\
0.14	0.122829\\
0.15	0.132829\\
0.16	0.142829\\
0.17	0.152829\\
0.18	0.162829\\
0.19	0.172829\\
0.2	0.182829\\
0.21	0.192829\\
0.22	0.202829\\
0.23	0.212829\\
0.24	0.222829\\
0.25	0.232829\\
0.26	0.242829\\
0.27	0.252829\\
0.28	0.262829\\
0.29	0.272829\\
0.3	0.282829\\
0.31	0.292829\\
0.32	0.302829\\
0.33	0.312829\\
0.34	0.322829\\
0.35	0.332829\\
0.36	0.342829\\
0.37	0.351987\\
0.38	0.36\\
0.39	0.367033\\
0.4	0.373205\\
0.41	0.378606\\
0.42	0.383303\\
0.43	0.38735\\
0.44	0.390788\\
0.45	0.393649\\
0.46	0.395959\\
0.47	0.397737\\
0.48	0.398997\\
0.49	0.39975\\
0.5	0.4\\
0.51	0.39975\\
0.52	0.398997\\
0.53	0.397737\\
0.54	0.395959\\
0.55	0.393649\\
0.56	0.390788\\
0.57	0.38735\\
0.58	0.383303\\
0.59	0.378606\\
0.6	0.373205\\
0.61	0.367033\\
0.62	0.37\\
0.63	0.38\\
0.64	0.39\\
0.65	0.4\\
0.66	0.41\\
0.67	0.42\\
0.68	0.43\\
0.69	0.44\\
0.7	0.45\\
0.71	0.46\\
0.72	0.47\\
0.73	0.48\\
0.74	0.49\\
0.75	0.5\\
0.76	0.51\\
0.77	0.52\\
0.78	0.53\\
0.79	0.54\\
0.8	0.55\\
0.81	0.56\\
0.82	0.57\\
0.83	0.58\\
0.84	0.59\\
0.85	0.6\\
0.86	0.61\\
0.87	0.62\\
0.88	0.63\\
0.89	0.64\\
0.9	0.65\\
0.91	0.66\\
0.92	0.67\\
0.93	0.68\\
0.94	0.69\\
0.95	0.7\\
0.96	0.71\\
0.97	0.72\\
0.98	0.73\\
0.99	0.74\\
1	0.75\\
};
\addlegendentry{optimal};

\addplot [color=black,dotted,line width=1.0pt]
  table[row sep=crcr]{0	-0.1\\
0	-0.0499999999999999\\
0.05	-0.0499999999999999\\
0.25	0.0499999999999999\\
0.3	0.1\\
0.5	0.25\\
0.55	0.3\\
0.8	0.5\\
0.85	0.55\\
0.9	0.6\\
0.9	0.65\\
0.95	0.7\\
1	0.75\\
};
\addlegendentry{order 2};

\addplot [color=mycolor1,dash pattern=on 1pt off 3pt on 3pt off 3pt,line width=1.0pt]
  table[row sep=crcr]{0.02	0\\
0.04	0.02\\
0.05	0.02\\
0.06	0.04\\
0.07	0.04\\
0.19	0.14\\
0.21	0.16\\
0.26	0.2\\
0.28	0.22\\
0.4	0.34\\
0.42	0.36\\
0.43	0.36\\
0.44	0.38\\
0.45	0.38\\
0.46	0.4\\
0.47	0.4\\
0.48	0.42\\
0.5	0.26\\
0.55	0.32\\
0.56	0.32\\
0.57	0.34\\
0.62	0.38\\
0.64	0.4\\
0.66	0.42\\
0.68	0.44\\
0.73	0.48\\
0.74	0.5\\
0.79	0.54\\
0.81	0.56\\
0.83	0.58\\
0.95	0.7\\
0.97	0.72\\
};
\addlegendentry{order 6};

\end{axis}
\end{tikzpicture}%
\caption{Comparison between reconstructed trajectories from relaxations of order $2$ and $6$, against the optimal solution for the example of \S\ref{sec:nonconvex}. The latter can be found by using any of the former as a starting point of a local method. }
\label{fig:nonconvex}
\end{figure}
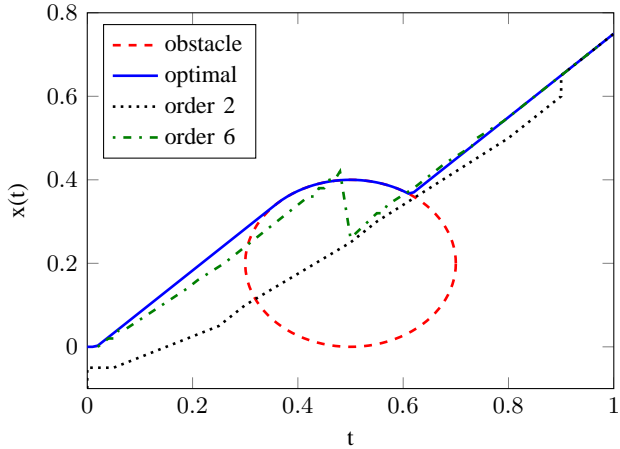

\subsection{Van der Pol oscillator} \label{sec:VDP}
In this last example, we show some alternative use of the proposed approach beyond optimal control. Consider the (uncontrolled) dynamical system $\dot{x} = f(x)$ given by the Van der Pol oscillator:
\begin{equation}
\begin{aligned}
\dot{x}_1 & = x_2 \\
\dot{x}_2 & = - x_1 + (1- x_1^2) x_2.
\end{aligned}
\end{equation}
Suppose one would like to compute its limit set, which is known here to be the union of a stable limit cycle and an unstable fixed point. Following e.g. \cite{gaitsgory}, one can relax this as the measure feasibility test
\begin{equation}
\label{eq:VDP}
\begin{aligned}
\exists \mu ? \; &  \\
\text{s.t.} \; & \forall v \in \R[\xbar]: \;  \langle  \frac{\partial v}{\partial \xbar}  f , \mu \rangle = 0, \\
& \langle 1, \mu \rangle = 1, \\
& \mu \in \PositiveMeasures(\mathbf{X}),
\end{aligned}
\end{equation}
with $\mathbf{X}$ a given compact, basic, semi-algebraic set. This problem can be solved via moment relaxations in a similar procedure as the one presented in \S\ref{sec:OCP}. Applying the atomic extraction procedure for the support of the resulting measure on $\mathbf{X} = [-3,3]^2$, and using moments of the $8$th relaxation, one obtains the results of Fig.~\ref{fig:VDP}. As expected, the extraction procedure locates (within precision of the supplied grid) the stable limit cycle, as well as the unstable equilibrium.

\begin{figure}
\centering
%
%
\begin{tikzpicture}

\begin{axis}[%
width=\figurewidth,
height=\figureheight,
scale only axis,
xmin=-2.5,
xmax=2.5,
xlabel={$x_1$},
ymin=-3,
ymax=3,
ylabel={$x_2$},
legend style={at={(0.97,0.03)},anchor=south east,draw=black,fill=white,legend cell align=left}
]
\addplot [color=blue,solid,line width=1.0pt]
  table[row sep=crcr]{2	0\\
1.9999999899057	-0.000200920631273141\\
1.99999963678758	-0.00120461577712104\\
1.99999032855506	-0.00620045415324861\\
1.99975813090187	-0.0306215837226132\\
1.99430017772298	-0.139756135824967\\
1.97316549077867	-0.276207924850522\\
1.94018778693124	-0.378677771844134\\
1.89819868755362	-0.457987501077065\\
1.84909382485377	-0.522115418063387\\
1.79408879437382	-0.57679381864626\\
1.73391157292987	-0.626140725054396\\
1.66893672878502	-0.673182249124649\\
1.59927393188012	-0.720246017906712\\
1.52482270339431	-0.769247877917971\\
1.44530258100503	-0.821901659317662\\
1.36026506088775	-0.879877565254455\\
1.26909147447489	-0.944927494764828\\
1.17097951951308	-1.01898765200524\\
1.06492057761646	-1.10425943488959\\
0.949670474019255	-1.20325612226013\\
0.823718605471346	-1.31878038030007\\
0.685265644146051	-1.45375832217905\\
0.532230498651519	-1.61078956649084\\
0.362325997916517	-1.79117278577123\\
0.173272611804375	-1.99305312809854\\
-0.0367426734663199	-2.2083239487402\\
-0.268233244813862	-2.41832466306293\\
-0.519140740850931	-2.58980269368264\\
-0.783335056286548	-2.67536103775528\\
-1.04964997443881	-2.62471792452937\\
-1.30270813237666	-2.40854773895574\\
-1.52629502526207	-2.04209010116726\\
-1.70811163482096	-1.58578956765231\\
-1.84306262069996	-1.11712403940138\\
-1.93318413869992	-0.696737548424418\\
-1.98501033700136	-0.353444020451387\\
-2.0065312801296	-0.0894467265939232\\
-2.00515080139525	0.107127621235671\\
-1.98682208897722	0.252154484753841\\
-1.95594688418396	0.360263694248676\\
-1.9156087558697	0.443086625783403\\
-1.8678826142508	0.509235720396361\\
-1.81410946264729	0.56489031276992\\
-1.75510595468458	0.614462917036565\\
-1.69131181631655	0.661161761129532\\
-1.62288788487803	0.707416759702997\\
-1.54977762181277	0.755189080725682\\
-1.47174217910753	0.806195654810842\\
-1.38837607803779	0.862076492124734\\
-1.29910812719586	0.924525352376804\\
-1.20319054533544	0.995396407196151\\
-1.09967841266892	1.07679083075617\\
-0.987401727265906	1.17111553940977\\
-0.864933983414029	1.28108737664421\\
-0.730565353855566	1.40962307325449\\
-0.582297117236707	1.55949860748579\\
-0.417889798676502	1.7325719883109\\
-0.235023771076994	1.92824848435132\\
-0.0316675770946823	2.14079989414637\\
0.193218952514642	2.35537294768973\\
0.43854548017627	2.54355918679552\\
0.699609664757352	2.66178678141599\\
0.966842100243951	2.65857435514674\\
1.22593385814966	2.49499656554952\\
1.46042760047854	2.17076821784409\\
1.65630681969951	1.73412394948468\\
1.80605640283034	1.2612819417728\\
1.90968415183101	0.820972540165882\\
1.97266916286046	0.452173562047778\\
2.00283100054689	0.164143896975973\\
2.00790403027736	-0.0519012583303328\\
2.00636706579025	-0.0904588617115785\\
};
\addlegendentry{cycle};

\addplot [color=red,only marks,mark=o,mark options={solid}]
  table[row sep=crcr]{-2.01	-0.21\\
-2.01	-0.18\\
-2.01	0.12\\
-2.01	0.15\\
-1.98	0.21\\
-1.98	0.24\\
-1.95	-0.6\\
-1.92	-0.75\\
-1.92	-0.72\\
-1.92	0.39\\
-1.92	0.42\\
-1.89	-0.87\\
-1.89	0.45\\
-1.86	0.48\\
-1.83	0.54\\
-1.8	-1.26\\
-1.71	-1.59\\
-1.71	0.66\\
-1.65	-1.74\\
-1.62	0.72\\
-1.59	0.75\\
-1.53	0.78\\
-1.5	-2.1\\
-1.5	0.81\\
-1.44	0.84\\
-1.38	0.87\\
-1.35	-2.34\\
-1.29	-2.43\\
-1.26	-2.46\\
-1.14	-2.58\\
-1.08	-2.61\\
-1.05	1.11\\
-1.02	1.14\\
-0.99	1.17\\
-0.93	-2.67\\
-0.9	-2.67\\
-0.81	1.32\\
-0.78	1.35\\
-0.75	-2.67\\
-0.75	1.38\\
-0.72	-2.67\\
-0.72	1.41\\
-0.51	-2.58\\
-0.45	-2.55\\
-0.39	1.77\\
-0.36	-2.49\\
-0.36	1.8\\
-0.33	-2.46\\
-0.33	1.83\\
-0.3	1.86\\
-0.27	1.89\\
-0.24	-2.4\\
-0.24	1.92\\
-0.21	-2.37\\
-0.18	-2.34\\
-0.15	-2.31\\
0	0\\
0.15	2.31\\
0.18	2.34\\
0.21	2.37\\
0.24	-1.92\\
0.24	2.4\\
0.27	-1.89\\
0.3	-1.86\\
0.33	-1.83\\
0.33	2.46\\
0.36	-1.8\\
0.36	2.49\\
0.39	-1.77\\
0.42	-1.74\\
0.51	2.58\\
0.63	2.64\\
0.72	-1.41\\
0.75	-1.38\\
0.78	-1.35\\
0.81	-1.32\\
0.93	-1.2\\
0.93	2.67\\
0.96	-1.17\\
0.99	-1.17\\
1.02	-1.14\\
1.05	-1.11\\
1.08	2.61\\
1.26	2.46\\
1.29	2.43\\
1.32	2.4\\
1.35	-0.9\\
1.35	2.34\\
1.44	-0.84\\
1.5	-0.81\\
1.53	-0.78\\
1.53	2.04\\
1.59	-0.75\\
1.62	-0.72\\
1.62	1.83\\
1.65	1.74\\
1.68	1.68\\
1.71	1.59\\
1.74	-0.63\\
1.77	-0.6\\
1.77	1.38\\
1.83	-0.54\\
1.89	-0.45\\
1.89	0.9\\
1.92	-0.42\\
1.92	0.72\\
1.92	0.75\\
1.95	-0.33\\
1.95	0.6\\
1.98	-0.24\\
1.98	-0.21\\
2.01	-0.15\\
2.01	-0.12\\
2.01	0.18\\
};
\addlegendentry{atomic};

\end{axis}

\end{tikzpicture}%
\caption{Reconstruction for the example of \S\ref{sec:VDP}. Simulated limit cycle versus atomic approximation.}
\label{fig:VDP}
\end{figure}
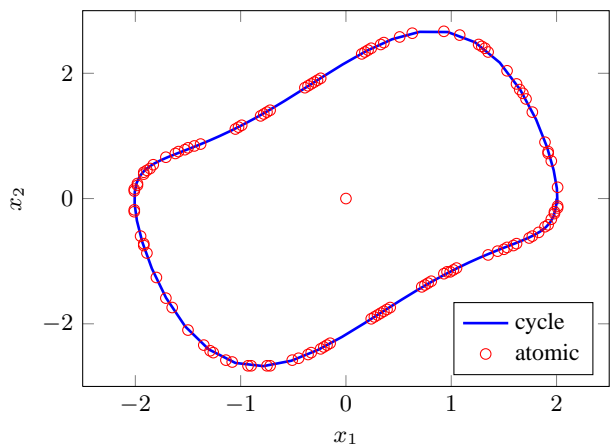

\section{Perspectives}

In this paper, we present a method for recovering trajectories from the moments of occupation measures. This allows for a numerical method solving optimal control problem globally, in a ``black box'' fashion. First, moment relaxations as presented in \cite{Lasserre2008NLOCP} are computed to obtain approximate truncated moment sequences. Then, the method outlined here solves the inverse problem to obtain approximate global solutions. Those solutions are then fed to a local method to guarantee admissibility of the process.

Numerically, the inverse problem \eqref{eq:approxLPexplicit} possesses a simple structure, as one tries to minimize the distance in a finite-dimensional space, as measured by the supremum norm, from a point to a linear subspace.  As such, it is expected that one could use dedicated solvers, using large scale, problem-specific routines to outperform the simple linear programming method considered here. See e.g. \cite{Nesterov} for an introduction on the subject. This opens the possibility of identifying approximate trajectories and controls in one pass, using all information available from moments.

Finally, we also presented the approach for bounded control for ease of exposition, but the method holds as well for modal occupation measures of switched systems \cite{Henrion2013Switched} and impulsive occupation measures \cite{impulseTAC}. This demonstrates the high flexibility of the approach.

\section*{Acknowledgments}

This work was supported by the Engineering and Physical Sciences Research Council under Grant EP/G066477/1, and benefited from discussions with Jean-Bernard Lasserre and Pierre Martinon.

\end{document}